\newtheorem{theorem}{Theorem}[section]
\newtheorem{corollary}{Corollary}[section]
\newtheorem{proposition}{Proposition}[section]
\newtheorem{lemma}{Lemma}[section]
\newtheorem{remark}{Remark}[section]
\theoremstyle{definition}
\newtheorem{definition}{Definition}[section]
\newtheorem{example}{Example}
\numberwithin{equation}{section}
\newcommand{\opD}{\operatorname{D_{pq}}}
\begin{document}

\title[On a New Type $pq$-Calculus]{On a New Type $pq$-Calculus}

\author{\.{I}lker GEN\c{C}T\"{U}RK}

\address{K{\i }r{\i }kkale University, Department of Mathematics, 71450, K{\i }r{\i }kkale, Turkey}%
\email{ilkergencturk@gmail.com}

\subjclass[AMS Subject Classification (2010)]{Primary 05A30; Secondary 34A25}
\keywords{$ pq $-derivative,  $ pq $-integration, fundamental theorem of $ pq $-calculus, $ pq $-integration by part  }%

\begin{abstract}
In this paper, we introduce a new type of $ pq $-calculus. The $ pq $-derivative and $ pq $-integration are investigated and various properties of these concepts are given. The fundamental theorem of $ pq $-calculus and formulas of $ pq $-integration by part are also presented.
\end{abstract}
\maketitle
\section{Introduction}
Classically, the derivative of a function $f(x)$ is defined by
\begin{equation}
f'(x)=\lim\limits_{y \to x} \frac{f(y)-f(x)}{y-x}.
\end{equation}

In 2002, Kac et al. published a work in which they described two types of quantum calculus, the $ q $-calculus and $ h $-calculus. The definitions of $ q $-derivative and $ h $-derivative of $ f(x) $ come with, if we do not take limit in derivative of a function $ f $, and taking $y=qx $ and $ y=x+h $, respectively. In \cite{KacCheung}, one can see that more details about these two types of quantum calculus. Also for further information about a history of $ q $-calculus, we refer to readers to \cite{Ernsthist}.

In recent years, researchers have shown an increasing interest in  $ q $-calculus.  A considerable amount of literature has been published on this subject. These studies have examined interesting applications in various mathematical areas such as fractional calculus, discrete function theory, umbral calculus, etc. See \cite{AnnMan,Ernstcomp,Ernstq,KocaGenAyd,MumNaj}  Also, as a special form of $ q $-calculus, $ pq $-calculus have been much attractive. See \cite{KocaGen,Sad}.

As a similar manner, if we avoid taking limit and also take $ y=x^p $, where $ p $ is different from $ 1 $, i.e., by considering the following expression
\begin{equation}\label{p-der}
	\frac{f(x^p)-f(x)}{x^p-x},
\end{equation}
we come across with a new type of quantum calculus, the $ p $-calculus, and also \eqref{p-der}, as definition of the $ p $-derivative, is given. The formula \eqref{p-der} and some properties of it are defined and investigated in \cite{NeaMeh}. In \cite{NeaMeh2}, authors bring forward some new properties of functions in $ p $-calculus.

Accordingly, the objective of the present work paper is to develop notation and terminology for a new type post quantum calculus, namely the $ pq $-calculus. By with appropriate choices, all results in this paper can be reduced in \cite{NeaMeh}.

The main finding of this paper can be summarized as follows. In section 2, $ pq $-derivative is described and given its attributes. In section 3, we present the $ pq $-antiderivative, $ pq $-integral and investigated their convergence. In the following section, we express definite the $ pq $-integral and improper $ pq $-integral. In the last section, the fundamental theorem of $ pq $-calculus is proved and formulas of $ pq $-integration by parts is obtained.

\section{$pq$-derivative}

In this section, we introduce $pq$-differential, $pq$-derivative and investigated some of their properties. Throughout this paper, it has been assumed that $ p $ and $ q $ are a fixed number different from 1 and domain of function $ f(x) $ is $ [0,\infty) $.

\begin{definition}
The $pq$-differential of the function $f$ is defined by

\begin{equation}\label{pqdiffdef}
d_{pq}f(x)=f(x^p)-f(x^q).
\end{equation}
In particular, $d_{pq}x=x^p-x^q$. By using \eqref{pqdiffdef}, it can be given definition of $pq$-derivative of a function .

\end{definition}

\begin{definition}
The $pq$-derivative of the function $f$ is defined by
\begin{equation}\label{pqderdef}
	\opD f(x)=\frac{f(x^p)-f(x^q)}{x^p-x^q},\quad x \neq 0,1,
\end{equation}
and
\begin{equation}
\opD f(0)=\lim\limits_{x \to 0^+} \opD f(x),  \opD f(1)=\lim\limits_{x \to 1} \opD f(x).
\end{equation}

\end{definition}

\begin{definition}
The $pq$-derivative of higher order of a function $f$ is defined by for $n \in \mathbb{N}$
\begin{equation}
\left(\opD^0 f\right)(x)=f(x), \left(\opD^n f\right)(x)=\opD~\left(\opD^{n-1}f\right)(x)
\end{equation}
\end{definition}

\begin{example}
	Let $f(x)=c, g(x)=x^n$ and $h(x)=\ln(x)$ where $c$ is a constant and $n \in \mathbb{N}$. Then we obtain
	\begin{itemize}
		\item $\opD~f(x)=0$,
		\item\[\opD~g(x)=\frac{g(x^p)-g(x^q)}{x^p-x^q}=\frac{x^{pn}-x^{qn}}{x^p-x^q}=\frac{x^{(p-1)n}-x^{(q-1)n}}{x^{p-1}-x^{q-1}}x^{n-1},\]
		\item \[\opD~h(x)=\frac{h(x^p)-h(x^q)}{x^p-x^q}=\frac{(p-q)\ln(x)}{x^{p}-x^{q}}=\frac{(p-q)\ln(x)}{x^{p-1}-x^{q-1}}\frac{1}{x}.\]
	\end{itemize}
\end{example}
Also we note that $pq$-derivative of a function is a linear operator. More precisely, $\opD$ has the property that for any constants $a$ and $b$ and arbitrary functions $f(x)$ and $g(x)$,

\begin{equation}
	\opD(af(x)+bg(x))=a \opD f(x)+b \opD g(x)
\end{equation}

\begin{proposition}
$pq$-derivative has the following product rules
\begin{eqnarray}
\opD\left(f(x) g(x)\right)&=& g(x^p)\opD f(x)+f(x^q)\opD g(x) \label{prrule1} \\
\opD\left(f(x) g(x)\right)&=& g(x^q)\opD f(x)+f(x^p)\opD g(x) \label{prrule2}
\end{eqnarray}
\end{proposition}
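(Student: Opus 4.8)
The plan is to work directly from the definition \eqref{pqderdef}, expanding
\begin{equation*}
\opD\left(f(x)g(x)\right)=\frac{f(x^p)g(x^p)-f(x^q)g(x^q)}{x^p-x^q},
\end{equation*}
and then to split the numerator by the classical ``add and subtract a cross term'' device familiar from the Leibniz rule in ordinary and $q$-calculus. The only genuine choice in the argument is which of the two mixed products $f(x^q)g(x^p)$ or $f(x^p)g(x^q)$ to insert, and the two options produce exactly the two formulas \eqref{prrule1} and \eqref{prrule2}. So there is no real obstacle here beyond bookkeeping; the interesting feature is simply that the splitting is not symmetric, and each choice distributes the factors $f$ and $g$ at the two exponents $p$ and $q$ differently.

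For \eqref{prrule1} I would insert $f(x^q)g(x^p)$, writing the numerator as
\begin{equation*}
f(x^p)g(x^p)-f(x^q)g(x^p)+f(x^q)g(x^p)-f(x^q)g(x^q).
\end{equation*}
Grouping the first pair and the second pair and factoring gives
$g(x^p)\bigl(f(x^p)-f(x^q)\bigr)+f(x^q)\bigl(g(x^p)-g(x^q)\bigr)$. Dividing through by $x^p-x^q$ and recognizing each resulting quotient as a $pq$-derivative via \eqref{pqderdef} yields $\opD\left(f(x)g(x)\right)=g(x^p)\opD f(x)+f(x^q)\opD g(x)$, which is \eqref{prrule1}.

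For \eqref{prrule2} I would run the same computation with the opposite cross term $f(x^p)g(x^q)$, writing the numerator as
\begin{equation*}
f(x^p)g(x^p)-f(x^p)g(x^q)+f(x^p)g(x^q)-f(x^q)g(x^q),
\end{equation*}
which factors as $f(x^p)\bigl(g(x^p)-g(x^q)\bigr)+g(x^q)\bigl(f(x^p)-f(x^q)\bigr)$. Dividing by $x^p-x^q$ produces $f(x^p)\opD g(x)+g(x^q)\opD f(x)$, establishing \eqref{prrule2}. Throughout, one works at $x\neq 0,1$ so that $x^p-x^q\neq 0$ and the division is legitimate; the edge values $x=0$ and $x=1$ are handled by the limiting definitions and need no separate treatment for the algebraic identities. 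I expect no step to be a true difficulty, but I would take care to present both cross-term choices explicitly, since their symmetry under interchange of $p$ and $q$ is precisely what makes the two product rules distinct rather than redundant.
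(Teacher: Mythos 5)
Your proposal is correct and follows essentially the same route as the paper: the paper also expands $\opD\left(f(x)g(x)\right)$ from the definition \eqref{pqderdef} and regroups the numerator as $g(x^p)\left[f(x^p)-f(x^q)\right]+f(x^q)\left[g(x^p)-g(x^q)\right]$, i.e.\ the same cross-term insertion of $f(x^q)g(x^p)$, then obtains \eqref{prrule2} by symmetry exactly as you do with the opposite cross term. No differences worth noting.
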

\begin{proof}
From definition of the $pq$-derivative, it can be seen that
\begin{eqnarray*}
\opD\left(f(x) g(x)\right)&=& \frac{f(x^p)g(x^p)-f(x^q)g(x^q)}{x^p-x^q} \\
&=& \frac{g(x^p)[f(x^p)-f(x^q)]+f(x^q)[g(x^p)-g(x^q)]}{x^p-x^q} \\
&=&g(x^p)\opD f(x)+f(x^q)\opD g(x).
\end{eqnarray*}
This proves \eqref{prrule1}. Similarly, by symmetry, \eqref{prrule2} can be deduced.
\end{proof}
\begin{proposition}
	$pq$-derivative has the following quotient rules
	\begin{eqnarray}
	\opD\left(\frac{f(x)}{g(x)}\right)&=& \frac{g(x^q)\opD f(x)-f(x^q)\opD g(x)}{g(x^q)g(x^p)} \label{quorule1} \\
	\opD\left(\frac{f(x)}{g(x)}\right)&=& \frac{g(x^p)\opD f(x)-f(x^p)\opD g(x)}{g(x^q)g(x^p)} \label{quorule2}
	\end{eqnarray}
\end{proposition}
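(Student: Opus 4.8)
The plan is to derive both identities directly from the definition \eqref{pqderdef}, since the two stated forms differ only in the bookkeeping of which cross term is added and subtracted in the numerator. First I would write
\[
\opD\left(\frac{f(x)}{g(x)}\right) = \frac{1}{x^p - x^q}\left(\frac{f(x^p)}{g(x^p)} - \frac{f(x^q)}{g(x^q)}\right) = \frac{f(x^p)g(x^q) - f(x^q)g(x^p)}{g(x^p)g(x^q)\,(x^p - x^q)},
\]
which places everything over the common denominator $g(x^p)g(x^q)(x^p-x^q)$ and isolates the single quantity $N := f(x^p)g(x^q) - f(x^q)g(x^p)$ that must be reshaped.

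The decisive step is to split $N$ in two different ways. For \eqref{quorule1} I would add and subtract $f(x^q)g(x^q)$, obtaining $N = g(x^q)[f(x^p)-f(x^q)] - f(x^q)[g(x^p)-g(x^q)]$; dividing through by $x^p-x^q$ converts the two bracketed $pq$-differences into $\opD f(x)$ and $\opD g(x)$, and cancelling $x^p-x^q$ against the denominator yields the claimed formula. For \eqref{quorule2} I would instead add and subtract $f(x^p)g(x^p)$, giving $N = g(x^p)[f(x^p)-f(x^q)] - f(x^p)[g(x^p)-g(x^q)]$, which produces the second expression in exactly the same manner. This mirrors the two symmetric groupings already used in the proof of the product rules \eqref{prrule1}--\eqref{prrule2}.

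Alternatively, and perhaps more in keeping with the structure just established, I could avoid recomputing from scratch and instead invoke the product rules. Writing $f = (f/g)\cdot g$ and applying \eqref{prrule1} gives $\opD f(x) = g(x^p)\,\opD\!\left(\frac{f}{g}\right)\!(x) + \dfrac{f(x^q)}{g(x^q)}\,\opD g(x)$; solving this linear equation for $\opD\!\left(\frac{f}{g}\right)\!(x)$ and clearing denominators produces \eqref{quorule1}, while the same manipulation starting from \eqref{prrule2} produces \eqref{quorule2}.

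I expect no genuine obstacle here, as the content is elementary algebra. The only points requiring care are the implicit hypotheses $g(x^p)\neq 0$ and $g(x^q)\neq 0$, which are needed both for the quotient to be defined and for the divisions to be legitimate, and ensuring that the correct cross term is chosen in each split so that the two distinct stated forms emerge, rather than two copies of the same one.
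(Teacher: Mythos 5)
Your proposal is correct, and your second ("alternative") argument is exactly the paper's proof: the paper disposes of both quotient rules in one line by applying the product rules \eqref{prrule1} and \eqref{prrule2} to the pair $\frac{f(x)}{g(x)}$ and $g(x)$, i.e.\ by expanding $\opD f=\opD\left(\frac{f}{g}\cdot g\right)$ and solving for $\opD\left(\frac{f}{g}\right)$. Your primary route --- putting $\frac{f(x^p)}{g(x^p)}-\frac{f(x^q)}{g(x^q)}$ over the common denominator and splitting the numerator $f(x^p)g(x^q)-f(x^q)g(x^p)$ by adding and subtracting either $f(x^q)g(x^q)$ or $f(x^p)g(x^p)$ --- is a genuinely different, self-contained derivation; I checked both splits and each does land on the stated formula with the correct denominator $g(x^q)g(x^p)$. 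What the direct computation buys is independence from the product rules and a transparent view of exactly where $g(x^p)\neq 0$ and $g(x^q)\neq 0$ are used (a hypothesis the paper leaves implicit); what the paper's route buys is brevity and reuse of the structure just established. Either version is acceptable, and you have in effect supplied both.
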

\begin{proof}
	The proof of these rules can be obtained by using \eqref{prrule1} and \eqref{prrule2} with functions $\frac{f(x)}{g(x)}$ and $g(x)$.
\end{proof}

\section{$pq$-integral}
In this section, we formalize the notion of $pq$-integral.  From now on, it can be assumed that $ 0<q<p<1 $.

\begin{definition}
The function $F(x)$ is a $pq$-antiderivative of $f(x)$ if $\opD F(x)=f(x)$. It is donated by
\begin{equation}\label{pqint}
	\int f(x) d_{pq}x.
\end{equation}
\end{definition}

Here, we discuss how to construct $pq$-integral which gives us $pq$-antiderivative $F(x)$ of an arbitrary function $f(x)$, i.e. $\opD F(x)=f(x)$. For this, we need two operators which are $\widetilde{M_p}, \widetilde{M_q} $ defined by $\widetilde{M_p}f(x):=f(x^p), \widetilde{M_q}f(x):=f(x^q)$.
From the definition of $pq$-derivative, by using operators $\widetilde{M_p}, \widetilde{M_q}$, we have

\begin{equation}
\opD F(x)=\frac{1}{x^p-x^q}\left(\widetilde{M_p}-\widetilde{M_q}\right)F(x)=f(x),
\end{equation}
then  by using geometric series expansion, we have

\begin{eqnarray}\label{pqintseri}
	F(x)&=&\frac{1}{\left(\widetilde{M_p}-\widetilde{M_q}\right)}((x^p-x^q)f(x)) \notag\\
	&=&\frac{1}{\widetilde{M_p}\left(1- \frac{\widetilde{M_q}}{\widetilde{M_p}}\right)}((x^p-x^q)f(x)) \notag \\
	&=& \sum_{j=0}^{\infty} \left(\widetilde{M_p}^{-j-1} \widetilde{M_q^j}\right) ((x^p-x^q)f(x)) \notag \\
	&=&\sum_{j=0}^{\infty} \left( x^{q^j p^{-j}}- x^{q^{j+1} p^{-j-1}} \right) f\left(x^{q^j p^{-j-1}}\right).
\end{eqnarray}

\begin{definition}
	The $pq$-integral of $f(x)$ is defined with serial expansion
	
	\begin{equation}\label{pqintdef}
		\int f(x) d_{pq}x= \sum_{j=0}^{\infty} \left( x^{q^j p^{-j}}- x^{q^{j+1} p^{-j-1}} \right) f\left(x^{q^j p^{-j-1}}\right).
	\end{equation}
\end{definition}

From this definition, it can be obtained a more general formula:

\begin{eqnarray*}
\int f(x) \opD g(x) d_{pq}x&=& 	 \sum_{j=0}^{\infty} \left( x^{q^j p^{-j}}- x^{q^{j+1} p^{-j-1}} \right) f\left(x^{q^j p^{-j-1}}\right) \opD g\left(x^{q^j p^{-j-1}}\right) \notag \\
&=&  \sum_{j=0}^{\infty} \left( x^{q^j p^{-j}}- x^{q^{j+1} p^{-j-1}} \right) f\left(x^{q^j p^{-j-1}}\right) \frac{g\left(\left(x^{q^j p^{-j-1}}\right)^p\right)-g\left(\left(x^{q^j p^{-j-1}}\right)^q\right)}{\left( x^{q^j p^{-j}}- x^{q^{j+1} p^{-j-1}} \right)} \notag \\
&=& \sum_{j=0}^{\infty} f\left(x^{q^j p^{-j-1}}\right) \left(g\left(x^{q^j p^{-j}}\right)-g\left(x^{q^{j+1} p^{-j-1}}\right)\right)
\end{eqnarray*}
or otherwise stated
\begin{equation}\label{pqintgeneral1}
\int f(x) d_{pq}g(x)= \sum_{j=0}^{\infty} f\left(x^{q^j p^{-j-1}}\right) \left( g\left(x^{q^j p^{-j}}\right)-g\left(x^{q^{j+1} p^{-j-1}}\right)\right).
\end{equation}

It is noted that \eqref{pqintdef} is formal because the series does not always converges. The following theorem gives a sufficient condition for under what conditions it really converges to a $pq$-antiderivative.

\begin{theorem}\label{coninttheo}
 If $|f(x)x^{\alpha}|$ is bounded on the interval $\left(0,A\right]$ for some $0 \leq \alpha <1$, at that point the  $pq$-integral \eqref{pqintdef} converges to a function $H(x)$ on $\left(0,A\right]$, which is a $pq$-antiderivative of $f(x)$. In addition, $H(x)$ is continuous at $x=1$ with $H(1)=0$.
\end{theorem}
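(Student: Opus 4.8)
The plan is to establish the three assertions—convergence of \eqref{pqintdef} to a function $H$, the identity $\opD H=f$, and the boundary behaviour at $x=1$—in that order, the first being the technical heart and the last two then following almost formally. Throughout I set $r=q/p$, so that $0<r<1$ by the standing assumption $0<q<p<1$, and I abbreviate $c_j=q^jp^{-j}=r^{\,j}$, so the $j$-th term of \eqref{pqintdef} becomes $g_j(x)=\bigl(x^{c_j}-x^{c_{j+1}}\bigr)f\bigl(x^{c_j/p}\bigr)$. The crucial structural feature is that $c_j/p\to0$, so the argument $x^{c_j/p}\to1$ and each $g_j(x)\to0$ termwise; the relevant accumulation of the series is therefore \emph{near the fixed point} $x=1$, not near $0$.

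For convergence I would estimate $\abs{g_j(x)}$ directly. Applying the mean value theorem to $s\mapsto x^s$ gives $\abs{x^{c_j}-x^{c_{j+1}}}\le c_j(1-r)\abs{\ln x}\max\bigl(x^{c_j},x^{c_{j+1}}\bigr)$, and the hypothesis that $\abs{f(y)y^{\alpha}}\le M$ on $(0,A]$ yields $\abs{f(x^{c_j/p})}\le M\,x^{-\alpha c_j/p}$. Combining these (for $0<x\le1$, the case $x>1$ being symmetric) I obtain
\[
\abs{g_j(x)}\le M(1-r)\,\abs{\ln x}\,c_j\,x^{\,c_j(q-\alpha)/p},\qquad c_j=r^{\,j}.
\]
Since $c_j=r^{\,j}$ and $x^{\,c_j(q-\alpha)/p}\to1$ as $j\to\infty$, the general term is dominated by a constant multiple of $r^{\,j}$, so the series converges absolutely, and uniformly on compact subsets of the domain; its sum I call $H(x)$. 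The one point requiring care is that the arguments $x^{c_j/p}$ stay inside $(0,A]$, where the bound on $f$ is valid; for $0<x\le1$ they lie in $[x^{1/p},1)$, which is the natural domain on which the estimate applies.

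To verify that $H$ is a $pq$-antiderivative I would exploit the interlocking-grid relation $p\,c_{j+1}=q\,c_j$, immediate from $c_{j+1}=rc_j=(q/p)c_j$. Substituting $x^p$ and $x^q$ into \eqref{pqintdef} and using $(x^p)^{c_j/p}=x^{c_j}$, $(x^q)^{c_j/p}=x^{c_{j+1}}$, I find $H(x^p)=\sum_{j\ge0}\bigl(x^{pc_j}-x^{pc_{j+1}}\bigr)f(x^{c_j})$ and, after replacing $q\,c_j$ by $p\,c_{j+1}$ and relabelling the index, $H(x^q)=\sum_{j\ge1}\bigl(x^{pc_j}-x^{pc_{j+1}}\bigr)f(x^{c_j})$. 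The two series have identical summands, so the subtraction telescopes and leaves only the $j=0$ term of $H(x^p)$; since $c_0=1$ and $pc_1=q$ this residue is exactly $(x^p-x^q)f(x)$, whence $\opD H(x)=f(x)$. The absolute convergence from the previous step is precisely what legitimises this reindexing and term-by-term subtraction. For the boundary behaviour, note that at $x=1$ every factor $x^{c_j}-x^{c_{j+1}}$ equals $1-1=0$, so $H(1)=0$ at once; and the displayed bound, in which $x^{\,c_j(q-\alpha)/p}\to1$ uniformly as $x\to1$, gives $\sum_j\abs{g_j(x)}\le C\abs{\ln x}\to0$, forcing $H(x)\to0=H(1)$, so $H$ is continuous at $x=1$.

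I expect the main obstacle to be the convergence estimate, where one must control the competition between the prefactor $x^{c_j}-x^{c_{j+1}}$, which decays geometrically like $r^{\,j}$, and the potential blow-up of $f$ carried by $x^{-\alpha c_j/p}$, all while keeping the arguments $x^{c_j/p}$ within the region $(0,A]$ on which $f$ is controlled. This is where the assumptions $0\le\alpha<1$ (mirroring the classical Jackson-integral convergence condition) and $0<q<p<1$ (which makes $r=q/p<1$ and supplies the geometric decay) enter decisively; once uniform geometric domination is secured, the antiderivative identity and the continuity at $x=1$ follow with little further effort.
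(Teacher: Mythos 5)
Your proposal is correct and follows the same overall architecture as the paper's proof (bound $f$ via the hypothesis, dominate the series, verify $\opD H=f$ by the reindexing/telescoping identity $q\cdot q^jp^{-j}=p\cdot q^{j+1}p^{-j-1}$, and read off continuity at $x=1$ from the quantitative bound), but the key convergence estimate is genuinely different. The paper never invokes geometric decay of individual terms: it splits into the cases $x\in(1,A]$ and $x\in(0,1)$, bounds $\bigl|f\bigl(x^{q^jp^{-j-1}}\bigr)\bigr|$ by $M$ (resp.\ $Mx^{-\alpha}$), and compares against the \emph{exactly telescoping} positive series $\sum_j\bigl(x^{q^jp^{-j}}-x^{q^{j+1}p^{-j-1}}\bigr)=|x-1|$, which simultaneously yields convergence and the bound $|H(x)|\le M|x-1|$ (resp.\ $Mx^{-\alpha}|1-x|$) used for continuity at $1$. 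You instead apply the mean value theorem to $s\mapsto x^s$ to extract the explicit geometric majorant $M(1-r)|\ln x|\,r^j x^{c_j(q-\alpha)/p}$ with $r=q/p$. Your route buys a unified treatment of both sides of $x=1$, absolute and locally uniform convergence stated explicitly (which is what honestly licenses the term-by-term subtraction in the antiderivative step — the paper performs that reindexing without comment), and the clean modulus-of-continuity bound $C|\ln x|$; the paper's route is more elementary, avoiding the MVT entirely because the telescoping sum is exact. One shared caveat: for $x>1$ the sample points $x^{q^jp^{-j-1}}$ (in particular $x^{1/p}>x$) need not lie in $(0,A]$, so the hypothesis must really be read as a bound on a slightly larger interval; your appeal to "symmetry" inherits this exactly as the paper's Case 1 does, so it is not a gap relative to the paper. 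Finally, note that your argument, like the paper's, only gives $\opD H=f$ away from $x=0,1$; the paper adds a closing remark that continuity of $f$ at $1$ is needed to get $\opD H(1)=f(1)$, which you may want to append.
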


\begin{proof}
	For the correctness of the theorem, we deal with following two cases. \newline
	\textbf{Case 1.} Let $x \in  \left(1,A\right]$ be. By assuming that $|f(x)x^{\alpha}|< M $ on $\left(1,A\right]$, it can be obtain for $0 \leq j$,
	\[
	\left|f\left(x^{q^j p^{-j-1}}\right)\right|< M \left(x^{q^j p^{-j-1}}\right)^{-\alpha} < M.
	\]

	Thus for any $1 < x \leq A$, we have
	\[
	\left|  \left(x^{q^j p^{-j}}- x^{q^{j+1} p^{-j-1}}\right) f\left(x^{q^j p^{-j-1}}\right) \right| \leq  \left(x^{q^j p^{-j}}- x^{q^{j+1} p^{-j-1}}\right) M.
	\]
	
	Since $1-\alpha > 0$ and $0<q<p<1$,
	\[
	 \sum_{j=0}^{\infty}  \left( x^{q^j p^{-j}}- x^{q^{j+1} p^{-j-1}} \right) M~=~M(x-1),
	\]
it can be seen that the series,  by a comparison test,  $pq$-integral converges to a function $F(x)$. By \eqref{pqintseri},  it shows directly that $F(1)=0$.

For   $1 < x \leq A$,
\[
\left|F(x) \right|=\left| \sum_{j=0}^{\infty}  \left( x^{q^j p^{-j}}- x^{q^{j+1} p^{-j-1}} \right) f\left(x^{q^j p^{-j-1}}\right) \right|\leq M(x-1)
\]
which tends to $0$ when $x \to 1^+$. Since $F(1)=0$, thereby $F$ is right continuous at $x=1$.

To verify that $F(x)$ is a $pq$-antiderivative of $ f(x) $, we $pq$-differentiate it:

\begin{eqnarray*}
	\opD F(x)&=&\frac{F(x^p)-F(x^q)}{x^p-x^q} \notag \\
	&=& \frac{\sum\limits_{j=0}^{\infty} \left( x^{q^j p^{1-j}}-x^{p^{-j} q^{j+1}}   \right) f\left(x^{q^j p^{-j}}\right)-\sum\limits_{j=0}^{\infty} \left( x^{q^{j+1} p^{-j}}-x^{p^{-j-1} q^{j+2}}   \right) f\left(x^{q^{j+1} p^{-j-1}}\right)}{x^p-x^q} \notag \\
	&=& \frac{\sum\limits_{j=0}^{\infty} \left( x^{q^j p^{1-j}}-x^{p^{-j} q^{j+1}}   \right) f\left(x^{q^j p^{-j}}\right)-\sum\limits_{j=1}^{\infty} \left( x^{q^{j} p^{-j+1}}-x^{p^{-j} q^{j+1}}   \right) f\left(x^{q^{j} p^{-j}}\right)}{x^p-x^q} \notag \\
		&=&f(x). \\
		&&
\end{eqnarray*}

\textbf{Case 2.} Let $x \in (0,1)$ be.  By assuming  $|f(x)x^{\alpha}|< M $ on $(0,1)$, it can be obtained for $0 \leq j$,
\[
\left| f\left(x^{q^j p^{-j-1}}\right) \right| < M \left(x^{q^j p^{-j-1}}\right)^{-\alpha} \leq M x^{-\alpha}.
\]

Hence for any $0<x<1$, we have
\[
\left|  \left(x^{q^j p^{-j}}- x^{q^{j+1} p^{-j-1}}\right) f\left(x^{q^j p^{-j-1}}\right) \right|\leq  \left(x^{q^{j+1} p^{-j-1}}-x^{q^j p^{-j}} \right) M x^{-\alpha}.
\]

Since $1-\alpha > 0$ and $0<q<p<1$,
\[
\sum_{j=0}^{\infty}  \left(x^{q^{j+1} p^{-j-1}}-x^{q^j p^{-j}} \right)  M x^{-\alpha}=M x^{-\alpha} (1-x),
\]
hence, by comparison test $ pq $-integral converges to a function $ G(x) $ and by \eqref{pqintseri} it shows directly   that $G(1)=0$. Similar to proof Case 1, it can be seen that $ G $ is left continuous at $ x=1 $ and is also $ pq $-antiderivative of $ f(x) $.

By bringing together above two cases, it is now possible to define a new function $ H(x) $ by
\[
H(x)=G(x){\chi_{(0,1)}}(x)+F(x){\chi_{(1,A]}}(x)
\]
where $ \chi_{I} $ is characteristic function on an interval $ I $.

As can be easily seen from function $H(x)$, $ pq $-integral converges to $ H(x) $ on $ (0,A] $ and it is also worth noting that $ H(x) $ is $ pq $-antiderivative of $ f(x) $ on $ (0,1) \cup (1,A] $ and is continuous at $ x=1 $ with $ H(1)=0 $.

If $ f(x)  $ continuous at $ x=1 $, then $ \opD H(1)=\lim\limits_{x \to 1} H(x)=f(1)$ and we conclude that $ H(x) $ is $ pq $-antiderivative of $ f(x) $  on $ (0,A] $, which proves the theorem.
\end{proof}

\begin{remark}
It has been demonstrated  that if it satisfies the assumptions of Theorem \ref{coninttheo}, then the $ pq $-integral gives the unique $ pq $-antiderivative which is continuous at $ x=1 $, up to a constant. In contrast, if we know that $ F(x) $ is a $ pq $-antiderivative of $ f(x) $ and $ F(x) $ is continuous at  $ x=1 $, $ F(x) $ must be given, up to a constant, by aid of \eqref{pqintdef}, since a partial sum of the $ pq $-integral is

\begin{equation*}
\sum_{j=0}^{N} \left( x^{q^j p^{-j}}- x^{q^{j+1} p^{-j-1}} \right) f\left(x^{q^j p^{-j-1}}\right)=\sum_{j=0}^{N} \left( x^{q^j p^{-j}}- x^{q^{j+1} p^{-j-1}} \right) \opD F(t)|_{t=x^{q^j p^{-j-1}}}
\end{equation*}

\begin{eqnarray*}
&=& \sum_{j=0}^{N} \left( x^{q^j p^{-j}}- x^{q^{j+1} p^{-j-1}} \right) \left(  \frac{F\left(x^{q^{j}p^{-j}}\right)-F\left( x^{q^{j+1}p^{-j-1}} \right)}{\left(x^{q^{j}p^{-j}}-x^{q^{j+1}p^{-j-1}}\right)}\right) \\
&=&\sum_{j=0}^{N} \left( F\left(x^{q^{j}p^{-j}}\right)-F\left( x^{q^{j+1}p^{-j-1}} \right) \right)  \\
&=&F(x)-F\left(x^{q^{N+1}p^{-N-1}}\right)
\end{eqnarray*}
which tends to $ F(x)-F(1) $ as $ N $ tends to $ \infty $, by the continuity of $ F(1) $ at $ x=1 $.
\end{remark}

In this theorem, boundedness of the function can not be removed. In fact, let $\displaystyle{ f(x)=\frac{1}{x^p-x^q} }$ be. By using definition of $ pq $-integral, one can get
\begin{equation*}
	\sum_{j=0}^{\infty} \left( x^{q^j p^{-j}}- x^{q^{j+1} p^{-j-1}} \right) f\left(x^{q^j p^{-j-1}}\right)=\sum_{j=0}^{\infty} \left( x^{q^j p^{-j}}- x^{q^{j+1} p^{-j-1}} \right)\frac{1}{\left( x^{q^j p^{-j}}- x^{q^{j+1} p^{-j-1}}\right)}=\infty
\end{equation*}
The formula fails because $ f(x)x^{\alpha} $ is not bounded for any $0 \leq \alpha <1  $ on $ (0,1)\cup (1,A] $.

\section{The definite $ pq $-integral}

In this section, we are defining the definite $ pq $-integral, by using formula \eqref{pqintdef}.
As in proof of the Theorem \ref{coninttheo}, we examine the following three cases, to gain our aim:

\textbf{Case 1.} Let $ 1<a<b $ and function $ f $ is defined on interval $ (1,b] $. 

\begin{definition}\label{defintcase1}
The definite $ pq $-integral of $ f(x) $ on interval $ (1,b] $ is given by
\begin{equation}\label{defintcase1eqn1}
\int\limits_{1}^b f(x) d_{pq}x=\lim\limits_{N \to \infty} 	\sum_{j=0}^{N} \left( b^{q^{j}p^{-j}}-b^{q^{j+1}p^{-j-1}}\right)f\left(b^{q^{j}p^{-j-1}}\right)
\end{equation}
and

\begin{equation}\label{defintcase1ii}
\int\limits_{a}^b f(x) d_{pq}x=\int\limits_{1}^b f(x) d_{pq}x-\int\limits_{1}^a f(x) d_{pq}x.
\end{equation}
\end{definition}

\begin{example}
Let $ b=4 $ and $ f(x)=c $ where $ c $ is constant.
\begin{eqnarray*}
\int\limits_{1}^4 c d_{pq}x&=&\lim\limits_{N \to \infty} 	\sum_{j=0}^{N} \left( 4^{q^{j}p^{-j}}-4^{q^{j+1}p^{-j-1}}\right)f\left(4^{q^{j}p^{-j-1}}\right) \\
&=& c \lim\limits_{N \to \infty} \left[ \left(4- 4^{q p^{-1}}\right)+\left(4^{q p^{-1}}- 4^{q^{2} p^{-2}}\right)+\ldots+\left(4^{q^{N} p^{-N}}- 4^{q^{N+1} p^{-N-1}}\right) \right]\\
&=& c \lim\limits_{N \to \infty} \left[ 4- 4^{q^{N+1} p^{-N-1}} \right]=c(4-1)=3c,
\end{eqnarray*}
and if $ a=3 $,

\begin{equation*}
\int\limits_{3}^4 c d_{pq}x=\int\limits_{1}^4 c d_{pq}x-\int\limits_{1}^3 c d_{pq}x=3c-2c=c.
\end{equation*}

\end{example}
\begin{example}
Let $ b=3 $ and $\displaystyle{ f(x)=\frac{ln(x)}{x^p-x^q} }$.
\begin{equation*}
\int\limits_{1}^3 f(x) d_{pq}x=\sum_{j=0}^{\infty} \left( 3^{q^j p^{-j}}- 3^{q^{j+1} p^{-j-1}} \right)\frac{\ln(3^{q^{j}p^{-j}})}{ 3^{q^j p^{-j}}- 3^{q^{j+1} p^{-j-1}}}=\sum_{j=0}^{\infty} q^{j}p^{-j}\ln 3=\frac{p \ln 3 }{p-q}
\end{equation*}
\end{example}

\textbf{Case 2.} Let $ 0<a<b<1 $ and function $ f $ is defined on $ [b,1) $.

\begin{definition}\label{defintcase2}
	The definite $ pq $-integral of $ f(x) $ on interval  $ [b,1) $ is given by
	\begin{eqnarray}\label{defintcase2eqn1}
	\int\limits_{b}^1 f(x) d_{pq}x&=& \lim\limits_{N \to \infty} \sum_{j=0}^{N} \left( b^{q^{j+1}p^{-j-1}}-b^{q^{j}p^{-j}}\right)f\left(b^{q^{j}p^{-j-1}}\right)\notag \\
	&=&	\sum_{j=0}^{\infty} \left( b^{q^{j+1}p^{-j-1}}-b^{q^{j}p^{-j}}\right)f\left(b^{q^{j}p^{-j-1}}\right)
	\end{eqnarray}

\end{definition}

\begin{example}
	Let $ b=\frac{1}{3} $ and $ f(x)=c $ where $ c $ is constant.
	\begin{eqnarray*}
		\int\limits_{\frac{1}{3}}^1 c d_{pq}x&=&\lim\limits_{N \to \infty} 	\sum_{j=0}^{N} \left( \left(\frac{1}{3}\right)^{q^{j+1}p^{-j-1}}-\left(\frac{1}{3}\right)^{q^{j}p^{-j}}\right)c \\
		&=& c \lim\limits_{N \to \infty} \left[ \left(\left(\frac{1}{3}\right)^{q p^{-1}}- \frac{1}{3}\right)+\left(\left(\frac{1}{3}\right)^{q^{2} p^{-2}}- \left( \frac{1}{3}\right)^{q^{} p^{-1}}\right)+ \ldots \right. \\
		&& \qquad\qquad\qquad\qquad\qquad \left. \ldots+\left(\left(\frac{1}{3}\right)^{q^{N+1} p^{-N-1}}-\left(\frac{1}{3}\right)^{q^{N} p^{-N}} \right) \right]\\
		&=& c \lim\limits_{N \to \infty} \left[ -\left(\frac{1}{3}\right)+ \left(\frac{1}{3}\right)^{q^{N+1} p^{-N-1}} \right]=c\left(-\frac{1}{3}+1\right))=\frac{2c}{3}.
	\end{eqnarray*}

\end{example}
\begin{remark}
	The definite $ pq $-integrals \eqref{defintcase1} and \eqref{defintcase2} are also denoted by
	\begin{eqnarray}
		\int\limits_{1}^b f(x) d_{pq}x&=& \operatorname{I_{pq}^{+}}f(b), \\
		\int\limits_{b}^1 f(x) d_{pq}x&=& \operatornamewithlimits{I_{pq}^{--}}f(b).
	\end{eqnarray}
\end{remark}

\textbf{Case 3.}  Let $ 0<a<b<1$ and function $ f $ is defined on $ (0,b] $.

\begin{definition}\label{defintcase3}
	The definite $ pq $-integral of $ f(x) $ on interval  $(0,b] $ is given by
	\begin{eqnarray}\label{defintcase3eqn1}
\operatorname{I_{pq}^{ }} f(b)&=&\int\limits_{0}^b f(x) d_{pq}x \notag \\
	&=&\lim\limits_{N \to \infty} \sum_{j=0}^{N} \left( b^{q^{-j}p^{j}}-b^{q^{-j-1}p^{j+1}}\right)f\left(b^{q^{-j}p^{j+1}}\right)\notag \\ 
	&=&	\sum_{j=0}^{\infty} \left( b^{q^{-j}p^{j}}-b^{q^{-j-1}p^{j+1}}\right) f\left(b^{q^{-j}p^{j+1}}\right)
	\end{eqnarray}
	and
	\begin{equation}\label{defintcase3ii}
	\int\limits_{a}^b f(x) d_{pq}x=\int\limits_{0}^b f(x) d_{pq}x-\int\limits_{0}^a f(x) d_{pq}x.
	\end{equation}
\end{definition}

\begin{example}
Let $ a=\frac{1}{4}, b=\frac{1}{2} $ and $ f(x)=c $ where $ c $ is constant.
\begin{eqnarray*}
	\int\limits_{0}^{\frac{1}{2}} c d_{pq}x&=&\lim\limits_{N \to \infty} 	\sum_{j=0}^{N} \left( \left(\frac{1}{2}\right)^{q^{-j}p^{j+1}}-\left(\frac{1}{2}\right)^{q^{-j}p^{j+1}}\right)c \\
	&=& c \lim\limits_{N \to \infty} \left[ \left(\frac{1}{2}-\left(\frac{1}{2}\right)^{q^{-1} p^{1}}\right)+\left(\left(\frac{1}{2}\right)^{q^{-1} p^{1}}- \left( \frac{1}{2}\right)^{q^{-2} p^{2}}\right)+ \right. \\
	&& \qquad\qquad\quad \quad \qquad\qquad \left. \ldots+\left(\left(\frac{1}{2}\right)^{q^{-N} p^{N}}-\left(\frac{1}{2}\right)^{q^{-N-1} p^{N+1}} \right) \right]\\
	&=& c \lim\limits_{N \to \infty} \left[ \left(\frac{1}{2}\right)+ \left(\frac{1}{2}\right)^{q^{-N-1} p^{N+1}} \right]=\frac{c}{2}.
\end{eqnarray*}
As a similar way,
\[
\int\limits_{0}^{\frac{1}{4}} c d_{pq}x=\frac{c}{4},
\]
thus we have
\[
\int\limits_{\frac{1}{4}}^{\frac{1}{2}} c d_{pq}x=\int\limits_{0}^{\frac{1}{2}} c d_{pq}x-\int\limits_{0}^{\frac{1}{4}} c d_{pq}x=\frac{c}{4}
\]
\end{example}

\begin{definition}\label{defintcase4}
	Let  $ 0 \leq a<1<b$ be. The definite $ pq $-integral of $ f(x)$ is given by
	\begin{eqnarray}\label{defintcase4eqn1}
\int\limits_{a}^{b} f(x) d_{pq}x= \int\limits_{a}^1 f(x) d_{pq}x+\int\limits_{1}^b f(x) d_{pq}x.
	\end{eqnarray}
\end{definition}

\begin{corollary}
 Definitions of $ pq $-integrals and \eqref{pqintgeneral1} imply a more general formula:
\begin{itemize}
	\item[i.)] If $ b>1 $ then
	\[
	\int\limits_{1}^b f(x) d_{pq}g(x)= \sum_{j=0}^{\infty} f\left(b^{q^j p^{-j-1}}\right) \left( g\left(b^{q^j p^{-j}}\right)-g\left(b^{q^{j+1} p^{-j-1}}\right)\right).
	\]
		\item[ii.)] If $0<b<1 $ then
		
	\[
	\int\limits_{0}^b f(x) d_{pq}g(x)= \sum_{j=0}^{\infty} f\left(b^{q^{-j} p^{j+1}}\right) \left( g\left(b^{q^{-j} p^{j}}\right)-g\left(b^{q^{-j-1} p^{j+1}}\right)\right).
	\]	
	
\end{itemize}

\end{corollary}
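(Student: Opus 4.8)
The plan is to obtain both identities in exactly the way \eqref{pqintgeneral1} was obtained from \eqref{pqintdef}: insert the factor $\opD g$ into the definite $pq$-integral of $f$ and then cancel, index by index, the weight multiplying $f$ against the denominator of the $pq$-derivative. Concretely, if $y_j$ denotes the sample point attached to the $j$-th term of the relevant definite-integral series, then that term is $(\text{weight}_j)\,f(y_j)\,\opD g(y_j)$, and since $\opD g(y_j)=\bigl(g(y_j^{\,p})-g(y_j^{\,q})\bigr)/(y_j^{\,p}-y_j^{\,q})$, the whole identity reduces to checking that $\text{weight}_j=y_j^{\,p}-y_j^{\,q}$ for every $j$. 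When this holds the term collapses to $f(y_j)\bigl(g(y_j^{\,p})-g(y_j^{\,q})\bigr)$, and summing gives the stated formula. Thus the proof is essentially a matter of matching exponents, with convergence inherited in the same formal sense as in \eqref{pqintdef}.

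For part i) I would start from Definition \ref{defintcase1}, which presents $\int_1^b f(x)\,d_{pq}x$ as the series \eqref{pqintdef} evaluated at $x=b$; replacing $f$ by $f\,\opD g$ gives $\int_1^b f(x)\,d_{pq}g(x)$ as $\sum_{j\ge 0}\bigl(b^{q^j p^{-j}}-b^{q^{j+1}p^{-j-1}}\bigr)f\bigl(b^{q^j p^{-j-1}}\bigr)\opD g\bigl(b^{q^j p^{-j-1}}\bigr)$. Taking $y_j=b^{q^j p^{-j-1}}$ I would compute $y_j^{\,p}=b^{q^j p^{-j}}$ and $y_j^{\,q}=b^{q^{j+1}p^{-j-1}}$, so that $y_j^{\,p}-y_j^{\,q}$ is precisely the weight in Definition \ref{defintcase1}. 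The cancellation is therefore exact, and the sum collapses to the claimed $\sum_{j\ge0} f\bigl(b^{q^j p^{-j-1}}\bigr)\bigl(g(b^{q^j p^{-j}})-g(b^{q^{j+1}p^{-j-1}})\bigr)$; this is nothing but \eqref{pqintgeneral1} read at the fixed endpoint $x=b$.

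For part ii) I would run the same argument on Definition \ref{defintcase3}, the series adapted to $(0,b]$, whose weights carry the reciprocal exponent pattern $q^{-j}p^{j}$. Here the $j$-th weight is $b^{q^{-j}p^{j}}-b^{q^{-j-1}p^{j+1}}$, so I must identify the node $y_j$ whose $p$- and $q$-powers are exactly the two consecutive grid points $b^{q^{-j}p^{j}}$ and $b^{q^{-j-1}p^{j+1}}$; solving $\{y_j^{\,p},y_j^{\,q}\}=\{b^{q^{-j}p^{j}},b^{q^{-j-1}p^{j+1}}\}$ pins down $y_j$ and forces $y_j^{\,p}-y_j^{\,q}$ to coincide with the weight, after which the same collapse produces the increment $g(b^{q^{-j}p^{j}})-g(b^{q^{-j-1}p^{j+1}})$. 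The main obstacle, and the only place any care is needed, is exactly this exponent bookkeeping in the reciprocal regime: one must track carefully that the sample point feeding $f$ is consistent between Definition \ref{defintcase3} and the node dictated by $\opD g$, so that the denominator $y_j^{\,p}-y_j^{\,q}$ genuinely equals the weight and the cancellation of $g$-values goes through for every $j$. Part i) is then the transparent case in which this matching is immediate, while part ii) requires verifying it against the reversed grid before the telescoping collapse can be invoked.
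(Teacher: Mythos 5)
Your overall strategy --- substitute $f\,\opD g$ for $f$ in the relevant definite-integral series and cancel the weight against the denominator of $\opD g$ --- is exactly how the paper derives \eqref{pqintgeneral1}, and the paper offers nothing beyond that for this corollary. Part i) is complete and correct: with $y_j=b^{q^jp^{-j-1}}$ one has $y_j^p=b^{q^jp^{-j}}$ and $y_j^q=b^{q^{j+1}p^{-j-1}}$, so the weight in Definition \ref{defintcase1} is exactly $y_j^p-y_j^q$ and the collapse is immediate.

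Part ii), however, has a genuine gap at precisely the step you defer. Solving $\{y_j^p,y_j^q\}=\{b^{q^{-j}p^{j}},b^{q^{-j-1}p^{j+1}}\}$ pins down $y_j=b^{q^{-j-1}p^{j}}$, with $y_j^q=b^{q^{-j}p^{j}}$ and $y_j^p=b^{q^{-j-1}p^{j+1}}$; hence the weight equals $y_j^q-y_j^p$, i.e.\ \emph{minus} $y_j^p-y_j^q$ (the sign is absorbed by the order of the $g$-increment, so that part is harmless). But this node is \emph{not} the sample point $b^{q^{-j}p^{j+1}}$ that appears both in Definition \ref{defintcase3} and in the statement of ii): for $y_j=b^{q^{-j}p^{j+1}}$ one computes $y_j^p=b^{q^{-j}p^{j+2}}$ and $y_j^q=b^{q^{1-j}p^{j+1}}$, neither of which is one of the grid points $b^{q^{-j}p^{j}}$, $b^{q^{-j-1}p^{j+1}}$; the two exponent sets coincide only when $pq=1$. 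Consequently the denominator of $\opD g$ at the printed sample point does not cancel the weight, and the term-by-term telescoping you invoke fails. The consistency check you flag as ``the only place any care is needed'' is exactly the one that breaks down against the definitions as printed: to complete the argument you must either carry out the computation with the node $b^{q^{-j-1}p^{j}}$ (which produces the displayed $g$-increments but with $f$ evaluated at $b^{q^{-j-1}p^{j}}$ rather than at $b^{q^{-j}p^{j+1}}$), or concede that ii) holds only as a formal substitution. This mismatch is arguably inherited from Definition \ref{defintcase3} itself rather than introduced by you, but as written your proof of ii) does not go through.
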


\begin{definition}
The $ pq $-integral of higher order of function $ f $ is given by
\[
\left( \operatorname{I_{pq}^0} f\right)(x)=f(x), \quad \left( \operatorname{I_{pq}^n} f\right)(x)=\operatorname{I_{pq}}\left(\operatorname{I_{pq}^{n-1}} f\right)(x), \quad n\in \mathbb{N}.
\]
\end{definition}

\section{ Improper $ pq $-integral}

In this section,  we need to take a look at the improper $ pq $-integral of $ f(x) $ and  sufficient condition for its convergence.

Let $ 0<q<p<1 $, thus $\left(\frac{q}{p}\right)^{-1}>1$  and set  $b= \left(\frac{q}{p}\right)^{-1}$. For any $ j \in {0, \pm 1, \pm 2, \ldots} $, we have $b^{\frac{q^{j}}{p^{j}}}>1,\quad b^{\frac{q^{j+1}}{p^{j+1}}}<b^{\frac{q^{j}}{p^{j}}} $ and thus according \eqref{defintcase1ii}, we first examine integral

\begin{eqnarray*}
\int\limits_{b^{q^{j+1}p^{-j-1}}}^{b^{q^{j}p^{-j}}} f(x) d_{pq}x&=&\int\limits_{1}^{b^{q^{j}p^{-j}}} f(x) d_{pq}x-\int\limits_{1}^{b^{q^{j+1}p^{-j-1}}} f(x) d_{pq}x \\
&=& \sum_{k=0}^{\infty} \left( \left( b^{q^{j}p^{-j}}\right)^{q^{k}p^{-k}}-\left( b^{q^{j}p^{-j}}\right)^{q^{k+1}p^{-k-1}}\right)f\left(\left( b^{q^{j}p^{-j}}\right)^{q^{k}p^{-k-1}}\right) \\
&&-\sum_{k=0}^{\infty} \left( \left( b^{q^{j+1}p^{-j-1}} \right)^{q^{k}p^{-k}}-\left( b^{q^{j+1}p^{-j-1}}\right)^{q^{k+1}p^{-k-1}}\right)f\left(\left( b^{q^{j+1}p^{-j-1}}\right)^{q^{k}p^{-k-1}}\right) \\
&=&\sum_{k=0}^{\infty} \left(  b^{q^{j+k}p^{-j-k}}- b^{q^{j+k+1}p^{-j-k-1}}\right)f\left( b^{q^{j+k}p^{-j-k-1}}\right) \\
&&-\sum_{k=0}^{\infty} \left(  b^{q^{j+k+1}p^{-j-k-1}}- b^{q^{j+k+2}p^{-j-k-2}}\right)f\left( b^{q^{j+k+1}p^{-j-k-2}}\right),
\end{eqnarray*}
and thus, implies that
\[
\int\limits_{b^{q^{j+1}p^{-j-1}}}^{b^{q^{j}p^{-j}}} f(x) d_{pq}x=\left(  b^{q^{j}p^{-j}}- b^{q^{j+1}p^{-j-1}}\right)f\left( b^{q^{j}p^{-j-1}}\right).
\]
\begin{definition}\label{impdef1}
Let $ 0<q<p<1 $ and set $b=\left(\frac{q}{p}\right)^{-1}$. The improper $ pq $-integral of $ f(x) $ on $[1, \infty)  $ is given by

\begin{eqnarray}
\int\limits_{1}^{\infty} f(x) d_{pq}x&=&\sum\limits_{j=- \infty}^{\infty} \int\limits_{b^{q^{j+1}p^{-j-1}}}^{b^{q^{j}p^{-j}}} f(x) d_{pq}x \notag \\
&=& \sum\limits_{j=- \infty}^{\infty}\left(  b^{q^{j}p^{-j}}- b^{q^{j+1}p^{-j-1}}\right)f\left( b^{q^{j}p^{-j-1}}\right) \notag \\
&=&\sum\limits_{j=0}^{\infty}\left(  b^{q^{j}p^{-j}}- b^{q^{j+1}p^{-j-1}}\right)f\left( b^{q^{j}p^{-j-1}}\right) \notag \\
&&\quad + \sum\limits_{j=1}^{\infty}\left(  b^{q^{-j}p^{j}}- b^{q^{-j-1}p^{j+1}}\right)f\left( b^{q^{-j}p^{j+1}}\right). 
\end{eqnarray}
\end{definition}

\begin{definition}
If $ 0<q<p<1 $, then for any $ j \in {0, \pm 1, \pm 2, \ldots} $, we observe $ p^{q^{j}p^{-j}} \in (0,1), \quad  p^{q^{j}p^{-j}}< p^{q^{j+1}p^{-j-1}}$ and

\[
\int\limits_{0}^{1} f(x) d_{pq}x=\sum\limits_{j=- \infty}^{\infty} \left( p^{q^{j+1}p^{-j-1}}- p^{q^{j}p^{-j}} \right) f\left( p^{q^{-j}p^{j+1}}\right).
\]
Because, on account of \eqref{defintcase3ii}

\begin{eqnarray*}
\int\limits_{p^{q^{j}p^{-j}}}^{p^{q^{j+1}p^{-j-1}}} f(x) d_{pq}x&=&\int\limits_{0}^{p^{q^{j+1}p^{-j-1}}} f(x) d_{pq}x~-\int\limits_{0}^{p^{q^{j}p^{-j}}} f(x) d_{pq}x\\
&=& \left( p^{q^{j+1}p^{-j-1}}- p^{q^{j}p^{-j}} \right) f\left( p^{q^{-j}p^{j+1}}\right)
\end{eqnarray*}
It follows that,
\begin{eqnarray*}
\int\limits_{0}^{1} f(x) d_{pq}x&=&\sum\limits_{j=- \infty}^{\infty}\int\limits_{p^{q^{j}p^{-j}}}^{p^{q^{j+1}p^{-j-1}}} f(x) d_{pq}x\\
&=&\sum\limits_{j=- \infty}^{\infty} \left( p^{q^{j+1}p^{-j-1}}- p^{q^{j}p^{-j}} \right) f\left( p^{q^{-j}p^{j+1}}\right).
\end{eqnarray*}

\end{definition}

\begin{definition}
If $ 0<q<p<1 $ then the improper $ pq $-integral of $ f(x) $ on $ [0, \infty) $ is given by
\[
\int\limits_{0}^{\infty} f(x) d_{pq}x=\int\limits_{0}^{1} f(x) d_{pq}x+\int\limits_{1}^{\infty} f(x) d_{pq}x.
\]
\end{definition}

\begin{definition}\label{impdefinf}
If $ 0<q<p<1 $ then the improper $ pq $-integral of $ f(x) $ on $ [a, \infty) $ is given as  follows:
	\begin{itemize}
		\item[i.)] If $ 0<a<1 $ then
	    \[
		\int\limits_{a}^{\infty} f(x) d_{pq}x=\int\limits_{a}^{1} f(x) d_{pq}x+\int\limits_{1}^{\infty} f(x) d_{pq}x.
		\]
		\item[ii.)]  If $ a>1 $ then
	\end{itemize}
	\begin{equation*}
	\int\limits_{a}^{\infty} f(x) d_{pq}x=\lim\limits_{N \to \infty} \sum\limits_{j=1}^{N} \int\limits_{a^{q^{-j+1}p^{^j-1}}}^{a^{q^{-j}p^{j}}} f(x) d_{pq}x.
	\end{equation*}
\end{definition}

Now, we wish to investigate about convergence of the improper $ pq $-integral.

\begin{proposition}
The improper  $ pq $-integral of $ f(x) $ defined above converges on  $ [1, \infty) $ if $ f $ satisfies that  for $0<r<\infty$
\[
|f(x)|< \min\left\{rx^{^\alpha}, |x^{p^{-1}}-x^{q^{-1}}|^{-1}( \ln x)^{2\alpha} \right\}, 
\]
in neighborhood of $ x=1 $ with some $ 0 \leq \alpha <1 $ and for sufficiently large $ x $ with some $ -\varepsilon \leq \alpha <0 $ where $ \varepsilon $ is a small positive number.
	
\end{proposition}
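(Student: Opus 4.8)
The plan is to prove absolute convergence of the bilateral series
\[
\int\limits_{1}^{\infty} f(x)\,d_{pq}x=\sum_{j=-\infty}^{\infty}\left(b^{q^jp^{-j}}-b^{q^{j+1}p^{-j-1}}\right)f\left(b^{q^jp^{-j-1}}\right),\qquad b=\left(\frac{q}{p}\right)^{-1}>1,
\]
by applying a comparison test to its two tails. Writing $s_j=q^jp^{-j}=(q/p)^j$, the argument of $f$ in the $j$-th term is $b^{s_j/p}$; since $0<q/p<1$ we have $s_j\to 0$ as $j\to+\infty$ and $s_j\to+\infty$ as $j\to-\infty$, so $b^{s_j/p}\to 1$ and $b^{s_j/p}\to\infty$ respectively. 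Thus the terms with $j$ large and positive probe $f$ near $x=1$, while the terms with $j$ large and negative probe $f$ for large $x$, and these are exactly the two regimes in which the hypothesis supplies a bound. Only finitely many indices $j$ fall outside the prescribed neighbourhood of $1$ and the region of large $x$; on that intermediate compact range they contribute finitely many finite terms, so convergence reduces to controlling the two tails, using in each the member of the $\min$ that is convenient.

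For the tail $j\to+\infty$ I would use the bound $|f(x)|<rx^{\alpha}$ with $0\le\alpha<1$. For $j\ge 0$ one has $s_j\le 1$ and $s_{j+1}=(q/p)s_j<s_j$, so by the mean value theorem applied to $t\mapsto b^{t}$,
\[
0<b^{s_j}-b^{s_{j+1}}\le(\ln b)\,b\,\left(1-\frac{q}{p}\right)s_j,
\]
while $\left(b^{s_j/p}\right)^{\alpha}\le b^{\alpha/p}$ stays bounded. Hence the $j$-th term is $O(s_j)=O\!\left((q/p)^{j}\right)$, a geometric bound, and this tail converges absolutely.

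For the tail $j\to-\infty$ I would use the second bound $|f(x)|<\left|x^{p^{-1}}-x^{q^{-1}}\right|^{-1}(\ln x)^{2\alpha}$ with $-\varepsilon\le\alpha<0$; the first bound is useless here since the coefficient grows. With $s=s_j=(q/p)^{j}\to+\infty$ and $x=b^{s/p}$ one has $b^{s_j}-b^{s_{j+1}}<b^{s}$, which blows up doubly exponentially in $-j$. The saving comes from the weight: because $q<p$ gives $1/(pq)>1/p^{2}$, for $x>1$ we have $x^{q^{-1}}>x^{p^{-1}}$ and $\left|x^{p^{-1}}-x^{q^{-1}}\right|=b^{s/(pq)}-b^{s/p^{2}}\ge\tfrac12\,b^{s/(pq)}$ once $s$ is large. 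Therefore the $j$-th term is bounded by a constant times $(s\ln b/p)^{2\alpha}\,b^{\,s\left(1-1/(pq)\right)}$. Since $0<q<p<1$ forces $pq<1$, hence $1-1/(pq)<0$, the factor $b^{\,s(1-1/(pq))}$ decays like $\exp(-c\,s)$ with $s=(q/p)^{j}\to\infty$, overwhelming the merely polynomial factor $(s\ln b/p)^{2\alpha}$; the terms decay faster than geometrically and this tail converges too.

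The main obstacle is precisely this second tail: the coefficient $b^{q^jp^{-j}}$ grows super-exponentially as $j\to-\infty$, and everything rests on showing that the weight $\left|x^{p^{-1}}-x^{q^{-1}}\right|^{-1}$ in the hypothesis decays like $b^{-s/(pq)}$ and that the resulting net exponent $s\left(1-1/(pq)\right)$ is negative — which is exactly where the standing assumption $0<q<p<1$ (so that $pq<1$) enters. The polynomial factor $(\ln x)^{2\alpha}$ plays no role beyond being dominated, which is why any fixed $\alpha\in[-\varepsilon,0)$ suffices. Collecting the geometric bound on the first tail, the super-geometric bound on the second, and the finitely many bounded intermediate terms, the comparison test yields absolute convergence, proving the proposition.
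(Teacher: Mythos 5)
Your argument is correct and uses the same overall strategy as the paper --- split the bilateral series into the tail whose evaluation points tend to $x=1$ and the tail whose evaluation points tend to $\infty$, apply the relevant branch of the $\min$ in each regime, and conclude by comparison --- but your treatment of the decisive large-$x$ tail is genuinely different from the paper's. The paper's proof rests on an exact algebraic cancellation: at the evaluation point $y=b^{q^{-j}p^{j+1}}$ one has $y^{p^{-1}}=b^{q^{-j}p^{j}}$ and $y^{q^{-1}}=b^{q^{-j-1}p^{j+1}}$, so the weight $\abs{y^{p^{-1}}-y^{q^{-1}}}^{-1}$ in the hypothesis is precisely the reciprocal of the coefficient of the $j$-th term; the product collapses to $(\ln y)^{2\alpha}=(\ln b)^{2\alpha}\left(p^{2\alpha}\right)^{j+1}\left(q^{-2\alpha}\right)^{j}$, a geometric series with ratio $(p/q)^{2\alpha}<1$ because $\alpha<0$. (This identity is visibly the reason the hypothesis carries that particular weight, and it depends on the paper's specific re-indexing of the negative-index terms, which differs slightly from your direct reading of the bilateral sum.) You never invoke this cancellation: you bound the coefficient above by $b^{s}$ and the weight below by $\tfrac12 b^{s/(pq)}$ separately, and win because $pq<1$ makes the net exponent $s\left(1-1/(pq)\right)$ negative, yielding doubly exponential decay --- stronger than needed, at the price of the extra observation $pq<1$, and robust to the indexing ambiguity. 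Two smaller differences: you handle the tail near $x=1$ directly via the mean value theorem rather than by citing Theorem \ref{coninttheo} as the paper does (the hypothesis $rx^{\alpha}$ and that theorem's $Mx^{-\alpha}$ are interchangeable near $x=1$, so both routes are legitimate), and you explicitly dispose of the finitely many indices falling in neither regime, a point the paper passes over in silence.
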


\begin{proof}
Consider $ b= \left(\frac{q}{p}\right)^{-1}$. On account of Definition \ref{impdef1}, the proof falls naturally into two parts:
\begin{eqnarray*}
\int\limits_{1}^{\infty} f(x) d_{pq}x&=&\sum\limits_{j=0}^{\infty}\left(  b^{q^{j}p^{-j}}- b^{q^{j+1}p^{-j-1}}\right)f\left( b^{q^{j}p^{-j-1}}\right) \\
&&+ \sum\limits_{j=1}^{\infty}\left(  b^{q^{-j}p^{j}}- b^{q^{-j-1}p^{j+1}}\right)f\left( b^{q^{-j}p^{j+1}}\right).
\end{eqnarray*}

Under  the conditions stated above  and also by  Theorem \ref{coninttheo}, the convergence of the first sum is proved. For the second sum, assume that for large $ x $, we have $ |f(x)|<  |x^{p^{-1}}-x^{q^{-1}}|^{-1}( \ln x)^{2\alpha}  $ where $ -\varepsilon \leq \alpha <0 $. Then, for sufficiently large $ j $ we get

\[
\left| f\left( b^{q^{-j}p^{j+1}}\right) \right|< \left(  b^{q^{-j}p^{j}}- b^{q^{-j-1}p^{j+1}}\right)^{-1} \left( \ln b^{q^{-j}p^{j+1}} \right)^{2 \alpha}.
\]
Hence
\begin{eqnarray*}
\left| \left(  b^{q^{-j}p^{j}}- b^{q^{-j-1}p^{j+1}}\right)  f\left( b^{q^{-j}p^{j+1}}\right) \right| &\leq& \left(  b^{q^{-j}p^{j}}- b^{q^{-j-1}p^{j+1}}\right) \left(  b^{q^{-j}p^{j}}- b^{q^{-j-1}p^{j+1}}\right)^{-1} \left( \ln b^{q^{-j}p^{j+1}} \right)^{2 \alpha}
\\
&=& \left( \ln b^{q^{-j}p^{j+1}} \right)^{2 \alpha} = \left( \ln  b \right)^{2 \alpha} \left( p^{2\alpha} \right)^{j+1} \left( q^{-2\alpha} \right)^{j}
\end{eqnarray*}
Therefore, the second sum is also majorized by a convergent geometric series, and thus  converges.
\end{proof}

\section{Fundamental Theorem of $ pq $-Calculus}
In this section, we give the fundamental theorem of $ pq $-calculus which explains relation between $ pq $-derivative and $ pq $-integral. This relation will be proved once we prove some lemmas below.

\begin{lemma}\label{funlemma1}
Assume that $ x>1 $ . Then $ \opD \operatorname{I_{pq}^{+}}f(x)= f(x)$, and also $ \operatorname{I_{pq}^{+}} \opD f(x)=f(x)-f(1) $ holds  if  function $ f $ is continuous at $ x=1 $.
\end{lemma}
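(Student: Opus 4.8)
The plan is to treat the two assertions separately, since the first is essentially a restatement of a fact already proved, while the second is a genuine telescoping computation that forces the continuity hypothesis. For the first identity, I would observe that, as a function of its upper limit $x>1$, the object $\operatorname{I_{pq}^{+}}f(x)=\int_{1}^{x}f(t)\,d_{pq}t$ of Definition \ref{defintcase1} is literally the series \eqref{pqintdef} defining the indefinite $pq$-integral of $f$. Theorem \ref{coninttheo} (its Case 1, valid precisely on $(1,A]$) already establishes that this series is a $pq$-antiderivative of $f$, i.e. that applying $\opD$ to it returns $f$. Hence $\opD\operatorname{I_{pq}^{+}}f(x)=f(x)$ follows immediately by invoking Theorem \ref{coninttheo}; should a self-contained argument be preferred, one simply reruns the index-shift cancellation displayed in Case 1 of that proof.

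For the second identity I would apply the general formula \eqref{pqintgeneral1}, integrating $\opD f$ over $[1,x]$. The key is the choice of sample points $t_j=x^{q^{j}p^{-j-1}}$ appearing in the definite $pq$-integral: one checks that $t_j^{\,p}=x^{q^{j}p^{-j}}$ and $t_j^{\,q}=x^{q^{j+1}p^{-j-1}}$, so that the denominator $t_j^{\,p}-t_j^{\,q}$ of $\opD f(t_j)=\bigl(f(t_j^{\,p})-f(t_j^{\,q})\bigr)/(t_j^{\,p}-t_j^{\,q})$ cancels exactly against the weight $x^{q^{j}p^{-j}}-x^{q^{j+1}p^{-j-1}}$ attached to each term. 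Each summand therefore collapses to $f\bigl(x^{q^{j}p^{-j}}\bigr)-f\bigl(x^{q^{j+1}p^{-j-1}}\bigr)$, and the sum telescopes so that the $N$-th partial sum equals $f(x)-f\bigl(x^{q^{N+1}p^{-N-1}}\bigr)$. This is exactly the cancellation foreshadowed in the Remark following Theorem \ref{coninttheo}.

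It remains to pass to the limit $N\to\infty$. Since $0<q<p<1$, the exponent satisfies $q^{N+1}p^{-N-1}=(q/p)^{N+1}\to 0$, whence $x^{q^{N+1}p^{-N-1}}\to x^{0}=1$ for every fixed $x>1$. Continuity of $f$ at $x=1$ then gives $f\bigl(x^{q^{N+1}p^{-N-1}}\bigr)\to f(1)$, and therefore $\operatorname{I_{pq}^{+}}\opD f(x)=f(x)-f(1)$. I expect this endpoint limit to be the only real obstacle: the telescoping is purely algebraic and unconditional, but the boundary term $f\bigl(x^{q^{N+1}p^{-N-1}}\bigr)$ converges to the required value $f(1)$ only because $f$ is assumed continuous at $1$. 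This is precisely where the hypothesis enters, and it is indispensable, since without it the telescoped remainder need not have a limit at all.
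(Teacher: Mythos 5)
Your proposal is correct and follows essentially the same route as the paper: the first identity is exactly the index-shift cancellation from Case~1 of Theorem~\ref{coninttheo} (which the paper simply repeats rather than cites), and the second is the same weight-versus-denominator cancellation, telescoping to $f(x)-f\bigl(x^{q^{N+1}p^{-N-1}}\bigr)$, with $(q/p)^{N+1}\to 0$ and continuity of $f$ at $1$ supplying the limit $f(1)$. No substantive difference.
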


\begin{proof}
Since by  definitions of $ pq $-derivative and $ pq $-integral, we get
\begin{equation*}
\operatorname{I_{pq}^{+}} f(x)=\int\limits_{1}^x f(s)d_{pq}s=	\sum_{j=0}^{\infty} \left( x^{q^{j}p^{-j}}-x^{q^{j+1}p^{-j-1}}\right)f\left(x^{q^{j}p^{-j-1}}\right).
\end{equation*}

Hence,
\begin{eqnarray*}
\opD^{} \operatorname{I_{pq}^{+}} f(x)&=& \frac{\operatorname{I_{pq}^{+}}f(x^{p})-\operatorname{I_{pq}^{+}} f(x^{q})	}{x^{p}-x^{q}} \\
&=& \frac{\sum\limits_{j=0}^{\infty} \left( x^{q^{j}p^{-j+1}}-x^{q^{j+1}p^{-j}}\right)f\left(x^{q^{j}p^{-j}}\right)-\sum\limits_{j=0}^{\infty} \left( x^{q^{j+1}p^{-j}}-x^{q^{j+2}p^{-j-1}}\right)f\left(x^{q^{j+1}p^{-j-1}}\right)}{x^{p}-x^{q}} \\
&=& \frac{\left[ (x^p-x^q)f(x)+(x^q-x^{q^{2}p^{-1}})f(x^{qp^{-1}})+(x^{q^{2}p^{-1}}-x^{q^{3}p^{-2}})f(x^{q^{2}p^{-2}})+\ldots  \right] }{x^{p}-x^{q}} \\ && -\frac{\left[  (x^q-x^{q^{2}p^{-1}})f(x^{qp^{-1}})+(x^{q^{2}p^{-1}}-x^{q^{3}p^{-2}})f(x^{q^{2}p^{-2}})+\ldots   \right]}{x^{p}-x^{q}} \\
&=&\frac{(x^p-x^q)f(x)}{x^{p}-x^{q}}=f(x).
\end{eqnarray*}

Also,
\begin{eqnarray*}
 \operatorname{I_{pq}^{+}} \opD f(x)&=& \lim\limits_{N \to \infty} 	\sum_{j=0}^{N} \left( x^{q^{j}p^{-j}}-x^{q^{j+1}p^{-j-1}}\right)\opD f\left(x^{q^{j}p^{-j-1}}\right) \\
 &=&\lim\limits_{N \to \infty} 	\sum_{j=0}^{N} \left( x^{q^{j}p^{-j}}-x^{q^{j+1}p^{-j-1}}\right) \frac{f(x^{q^{j}p^{-j}})-f(x^{q^{j+1}p^{-j-1}})}{x^{q^{j}p^{-j}}-x^{q^{j+1}p^{-j-1}}} \\
  &=&\lim\limits_{N \to \infty} 	\sum_{j=0}^{N} \left[ f(x^{q^{j}p^{-j}})-f(x^{q^{j+1}p^{-j-1}}) \right] \\
  &=&\lim\limits_{N \to \infty} \left[ f(x)-f(x^{q^{}p^{-1}}) + f(x^{q^{}p^{-1}})-f(x^{q^{2}p^{-2}})+ \ldots + f(x^{q^{N}p^{-N}})-f(x^{q^{N+1}p^{-N-1}}) \right]  \\
  &=&\lim\limits_{N \to \infty} \left[ f(x)-f(x^{q^{N+1}p^{-N-1}}) \right] \\
  &=&f(x)-f(1).
\end{eqnarray*}

We noted that the last equality holds because of continuity of $ f $ at $ x=1 $

\end{proof}
By a similar argument,  the following lemmas can be easily obtained.

\begin{lemma}\label{funlemma2}
Assume that $ x \in (0,1)$. In this case  $ \opD^{} \operatorname{I_{pq}^{--}}f(x)= -f(x)$, and also  $ \operatorname{I_{pq}^{--}} \opD f(x)=f(1)-f(x)  $ holds  if function $ f $ is continuous at $ x=1 $. 
\end{lemma}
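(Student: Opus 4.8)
The plan is to follow the template of Lemma \ref{funlemma1} verbatim, simply replacing the operator $\operatorname{I_{pq}^{+}}$ by $\operatorname{I_{pq}^{--}}$ and invoking its series representation from Definition \ref{defintcase2},
\[
\operatorname{I_{pq}^{--}}f(x)=\int\limits_{x}^{1} f(s)\,d_{pq}s=\sum_{j=0}^{\infty}\left(x^{q^{j+1}p^{-j-1}}-x^{q^{j}p^{-j}}\right)f\left(x^{q^{j}p^{-j-1}}\right).
\]
To prove $\opD\operatorname{I_{pq}^{--}}f(x)=-f(x)$ I would substitute $x\mapsto x^{p}$ and $x\mapsto x^{q}$ into this series and simplify the exponents, using that multiplying the base by a power of $p$ or $q$ shifts each exponent $q^{j}p^{-j}$ predictably. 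The two resulting series then differ only by a one-step index shift, so the numerator $\operatorname{I_{pq}^{--}}f(x^{p})-\operatorname{I_{pq}^{--}}f(x^{q})$ telescopes and collapses to the single surviving $j=0$ term $\left(x^{q}-x^{p}\right)f(x)$. Dividing by $x^{p}-x^{q}$ yields $-f(x)$; the extra minus sign is precisely the reflection of the orientation built into Definition \ref{defintcase2}, whose weights $x^{q^{j+1}p^{-j-1}}-x^{q^{j}p^{-j}}$ carry the opposite sign to those used for $x>1$ in Lemma \ref{funlemma1}.

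For the second identity I would expand $\operatorname{I_{pq}^{--}}\opD f(x)$ as a limit of partial sums and insert
\[
\opD f\left(x^{q^{j}p^{-j-1}}\right)=\frac{f\left(x^{q^{j}p^{-j}}\right)-f\left(x^{q^{j+1}p^{-j-1}}\right)}{x^{q^{j}p^{-j}}-x^{q^{j+1}p^{-j-1}}}.
\]
Here the weight $x^{q^{j+1}p^{-j-1}}-x^{q^{j}p^{-j}}$ is exactly the negative of the denominator, so each summand reduces to $f\left(x^{q^{j+1}p^{-j-1}}\right)-f\left(x^{q^{j}p^{-j}}\right)$, and the partial sum telescopes to
\[
\sum_{j=0}^{N}\left[f\left(x^{q^{j+1}p^{-j-1}}\right)-f\left(x^{q^{j}p^{-j}}\right)\right]=f\left(x^{q^{N+1}p^{-N-1}}\right)-f(x).
\]
The final step is to let $N\to\infty$: since $0<q<p<1$ gives $q^{N+1}p^{-N-1}=(q/p)^{N+1}\to 0$, the grid point $x^{q^{N+1}p^{-N-1}}=x^{(q/p)^{N+1}}$ tends to $1$, and continuity of $f$ at $x=1$ forces $f\left(x^{q^{N+1}p^{-N-1}}\right)\to f(1)$, yielding $\operatorname{I_{pq}^{--}}\opD f(x)=f(1)-f(x)$.

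The only genuinely delicate points are the exponent bookkeeping in the first part — arranging the index shift so that the two expanded series cancel termwise and leave exactly the $j=0$ contribution — and, in the second part, the verification that the telescoped endpoint $x^{(q/p)^{N+1}}$ converges to $1$ rather than to $0$ or $x$; both hinge on the standing hypothesis $0<q<p<1$, which makes $q/p<1$. Everything else is the same sign-and-cancellation routine already carried out in Lemma \ref{funlemma1}, so I expect the write-up to be short.
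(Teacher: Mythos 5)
Your proposal is correct and is exactly the argument the paper intends: the paper omits the proof of this lemma, stating only that it follows ``by a similar argument'' to Lemma \ref{funlemma1}, and your write-up carries out precisely that argument with the correct sign bookkeeping (the numerator collapsing to $(x^{q}-x^{p})f(x)$ and the telescoped endpoint $x^{(q/p)^{N+1}}\to 1$). No gaps.
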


\begin{lemma}\label{funlemma3}
Assume that $ x \in (0,1)$ . In this case $ \operatorname{I_{pq}^{ }} $ is defined by
	\[
	\operatorname{I_{pq}^{ }}f(x)=\int\limits_{0}^x f(s)d_{pq}s,
	\]
	then  $ \opD^{} \operatorname{I_{pq}^{ }}f(x)= f(x)$, and also $ \operatorname{I_{pq}^{ }} \opD f(x)=f(x)-f(0).  $ holds  if function $ f $ is continuous at $ x=0 $.
\end{lemma}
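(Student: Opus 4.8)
The plan is to mirror the proof of Lemma \ref{funlemma1}, replacing the series for $\operatorname{I_{pq}^{+}}$ by the series that defines $\operatorname{I_{pq}}$ in Definition \ref{defintcase3}. Two identities must be established for $x \in (0,1)$: first $\opD \operatorname{I_{pq}} f(x) = f(x)$, which is a purely algebraic cancellation requiring no hypothesis on $f$, and then $\operatorname{I_{pq}} \opD f(x) = f(x) - f(0)$, where the value $f(0)$ enters through a boundary limit, exactly as $f(1)$ did in Lemma \ref{funlemma1}. The only structural difference is that the series from $0$ samples $f$ at points $x^{q^{-j}p^{j+1}}$ that march toward $0$ rather than toward $1$, so the anchor of the telescoping sits at $0$.

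For the first identity I would write $\opD \operatorname{I_{pq}} f(x) = \dfrac{\operatorname{I_{pq}} f(x^p) - \operatorname{I_{pq}} f(x^q)}{x^p - x^q}$ and substitute the defining series into each numerator term. Composing every exponent with the $p$-th, respectively $q$-th, power map multiplies it by $p$ or by $q$; after this simplification the two resulting series are seen to coincide term-by-term under a shift of the summation index by one. Consequently all terms cancel except a single surviving boundary term, which works out to $(x^p - x^q)f(x)$, and dividing by $x^p - x^q$ leaves $f(x)$. The only delicate point here is the exponent bookkeeping: one must verify that $(x^p)^{q^{-j}p^{j+1}}$ and $(x^q)^{q^{-j}p^{j+1}}$ indeed land on the matching sample points so that the index shift is exact and nothing but the boundary term is left behind.

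For the second identity I would substitute $\opD f$ into the series and exploit the fact that the weight $x^{q^{-j}p^{j}} - x^{q^{-j-1}p^{j+1}}$ of the $j$-th summand is exactly the denominator of $\opD f$ at the corresponding sample point, its $p$-th and $q$-th powers being precisely the two endpoints of that weight. Hence each summand collapses to a difference $f(\cdot) - f(\cdot)$, and the partial sum up to $N$ telescopes to $f(x) - f\!\left(x^{q^{-N-1}p^{N+1}}\right)$, just as in Lemma \ref{funlemma1}; working with partial sums in this way also sidesteps any separate convergence discussion. The main obstacle, and the step that forces the continuity hypothesis, is the limit $N \to \infty$. Since $0 < q < p < 1$ gives $q^{-N-1}p^{N+1} = (p/q)^{N+1} \to \infty$, for $x \in (0,1)$ the sample point $x^{q^{-N-1}p^{N+1}} \to 0$, so continuity of $f$ at $0$ yields $f\!\left(x^{q^{-N-1}p^{N+1}}\right) \to f(0)$ and therefore $\operatorname{I_{pq}} \opD f(x) = f(x) - f(0)$. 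This is the exact analogue of the $x \to 1$ boundary limit used in Lemma \ref{funlemma1}, with the anchor relocated from $1$ to $0$.
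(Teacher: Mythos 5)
Your overall strategy --- mirror the proof of Lemma \ref{funlemma1} with the series of Definition \ref{defintcase3}, telescope, and let continuity at $0$ play the role that continuity at $1$ played before --- is exactly what the paper intends (it gives no written proof of this lemma, only the remark that it follows ``by a similar argument''). But the exponent bookkeeping that you yourself flag as ``the only delicate point'' does not check out for the sample point $x^{q^{-j}p^{j+1}}$ as printed in Definition \ref{defintcase3}, and both of your key assertions fail there. For the second identity you claim that the $p$-th and $q$-th powers of the sample point are the two endpoints of the weight; in fact
\[
\left(x^{q^{-j}p^{j+1}}\right)^p=x^{q^{-j}p^{j+2}},\qquad \left(x^{q^{-j}p^{j+1}}\right)^q=x^{q^{-j+1}p^{j+1}},
\]
and neither of these equals $x^{q^{-j}p^{j}}$ or $x^{q^{-j-1}p^{j+1}}$ (forcing a match would require $p^2=q^2$); it is the $1/p$-th and $1/q$-th powers of that sample point that hit the endpoints. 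So the summand does not collapse to a difference of values of $f$, and the telescoping never starts. For the first identity the index shift by one is indeed exact, but the single surviving term is the $j=0$ term of the $x^q$-series, namely $(x^{q}-x^{p})\,f\bigl((x^q)^{p}\bigr)=(x^q-x^p)f(x^{pq})$, so the computation yields $\opD\operatorname{I_{pq}}f(x)=f(x^{pq})$ rather than $f(x)$.

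The repair is to run your argument with the sample point $x^{q^{-j-1}p^{j}}$, whose $p$-th and $q$-th powers are exactly $x^{q^{-j-1}p^{j+1}}$ and $x^{q^{-j}p^{j}}$, i.e.\ the two endpoints in reversed order (the reversal supplies the sign that makes the weight equal to minus the denominator of $\opD f$). Then each summand of $\operatorname{I_{pq}}\opD f(x)$ collapses to $f\bigl(x^{q^{-j}p^{j}}\bigr)-f\bigl(x^{q^{-j-1}p^{j+1}}\bigr)$, the partial sums telescope to $f(x)-f\bigl(x^{(p/q)^{N+1}}\bigr)\to f(x)-f(0)$ by continuity at $0$, and the boundary term in the first computation becomes $(x^p-x^q)f(x)$ as you wanted. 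You should therefore either record this as a correction to Definition \ref{defintcase3} (evidently the intended reading, since the lemma is false for the formula as printed) or accept that the proof as written asserts two identities that are false. Your handling of the limit $x^{q^{-N-1}p^{N+1}}\to 0$ and of the continuity hypothesis is correct.
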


The following theorem is a reformulation of  the fundamental theorem of ordinary calculus, for $ pq $-calculus.

\begin{theorem}\label{funtheo}
 If  function $ F(x) $ is an antiderivative of $ f(x) $ and $ F(x) $ is continuous at $ x=0 $ and $ x=1 $, then for every $ 0 \leq a< b \leq \infty $, one can obtain 
\begin{equation}\label{funeqn1}
\int\limits_{a}^b f(x)d_{pq}x=F(b)-F(a).	
\end{equation}
\end{theorem}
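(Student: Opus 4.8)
The plan is to reduce every case of the general definite $pq$-integral to the three boundary integrals $\operatorname{I_{pq}^{+}}$, $\operatorname{I_{pq}^{--}}$ and $\operatorname{I_{pq}^{ }}$, for which Lemmas \ref{funlemma1}, \ref{funlemma2} and \ref{funlemma3} already compute the effect of $\opD$ on an antiderivative. Since $F$ is a $pq$-antiderivative of $f$, we have $\opD F=f$, so substituting $f=\opD F$ into the second identity of each lemma produces the three building blocks
\begin{eqnarray*}
\int\limits_{1}^{c} f(x)\,d_{pq}x &=& \operatorname{I_{pq}^{+}}\opD F(c)=F(c)-F(1), \qquad c>1,\\
\int\limits_{c}^{1} f(x)\,d_{pq}x &=& \operatorname{I_{pq}^{--}}\opD F(c)=F(1)-F(c), \qquad 0<c<1,\\
\int\limits_{0}^{c} f(x)\,d_{pq}x &=& \operatorname{I_{pq}^{ }}\opD F(c)=F(c)-F(0), \qquad 0<c<1,
\end{eqnarray*}
where the first two use continuity of $F$ at $x=1$ and the third uses continuity of $F$ at $x=0$. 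These three identities carry all of the telescoping already done inside the lemmas, so the rest of the argument is assembly.

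Next I would dispatch the finite cases by the location of $a$ and $b$ relative to $1$, invoking the definitions of the definite integral. When $1\le a<b<\infty$, Definition \ref{defintcase1} gives $\int_a^b f=\int_1^b f-\int_1^a f=(F(b)-F(1))-(F(a)-F(1))=F(b)-F(a)$, the value $F(1)$ cancelling. When $0\le a<b\le 1$, Definition \ref{defintcase3} gives $\int_a^b f=\int_0^b f-\int_0^a f=F(b)-F(a)$, with $F(0)$ cancelling; the endpoint $b=1$ (and $a=0$) is reached by letting $c\to 1^-$ (resp.\ using the third block directly) and appealing to continuity of $F$. When $0\le a<1<b<\infty$, Definition \ref{defintcase4} splits $\int_a^b f=\int_a^1 f+\int_1^b f=(F(1)-F(a))+(F(b)-F(1))=F(b)-F(a)$. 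In every finite case the intermediate boundary values $F(0)$ and $F(1)$ telescope away, leaving exactly $F(b)-F(a)$.

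Finally I would treat $b=\infty$ via Section 5, interpreting $F(\infty)$ as $\lim_{x\to\infty}F(x)$ (whose existence is precisely what makes the improper integral converge). For $a>1$, Definition \ref{impdefinf} writes $\int_a^\infty f=\lim_{N\to\infty}\sum_{j=1}^{N}\int_{a^{q^{-j+1}p^{j-1}}}^{a^{q^{-j}p^{j}}}f$; each endpoint exceeds $1$, so by the first building block the $j$-th summand equals $F(a^{q^{-j}p^{j}})-F(a^{q^{-j+1}p^{j-1}})$, and the sum telescopes to $F(a^{q^{-N}p^{N}})-F(a)$, which tends to $F(\infty)-F(a)$ since $q^{-N}p^{N}=(p/q)^{N}\to\infty$. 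For $0\le a<1$ I would use $\int_a^\infty f=\int_a^1 f+\int_1^\infty f$, where, with $b=\left(\frac{q}{p}\right)^{-1}$ as in Definition \ref{impdef1}, the $j$-th term of $\int_1^\infty f$ equals $F(b^{q^{j}p^{-j}})-F(b^{q^{j+1}p^{-j-1}})$; its partial sum over $-M\le j\le N$ telescopes to $F(b^{q^{-M}p^{M}})-F(b^{q^{N+1}p^{-N-1}})$, whose upper argument tends to $\infty$ and whose lower argument tends to $1$, giving $F(\infty)-F(1)$ by continuity at $x=1$. Combining, $\int_a^\infty f=F(\infty)-F(a)$ in all cases.

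The main obstacle will be the improper case: unlike the finite cases, Lemmas \ref{funlemma1}--\ref{funlemma3} do not apply verbatim, so I must carry out the telescoping by hand over the bi-infinite index set, justify exchanging the limit with the summation, and above all pin down the meaning of $F(\infty)$ together with the two endpoint limits $b^{q^{j}p^{-j}}\to 1$ and $b^{q^{-j}p^{j}}\to\infty$ that make the arguments of $F$ behave correctly. By contrast, once the three building blocks are in place the finite cases are essentially bookkeeping in which the interior boundary values cancel.
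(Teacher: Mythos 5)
Your proposal is correct and follows essentially the same route as the paper: apply the second identities of Lemmas \ref{funlemma1}--\ref{funlemma3} with $f=\opD F$ to get the three boundary integrals $F(c)-F(1)$, $F(1)-F(c)$, $F(c)-F(0)$, then assemble the cases $1<a<b$, $0<a<b<1$, and $a<1<b$ via \eqref{defintcase1ii}, \eqref{defintcase3ii}, and Definition \ref{defintcase4}. Your treatment of $b=\infty$ (telescoping and interpreting $F(\infty)$ as $\lim_{x\to\infty}F(x)$) matches what the paper defers to the remark following the theorem, where the existence of that limit is likewise assumed.
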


\begin{proof}
The proof falls naturally into three cases:

\textbf{Case 1.} Assume that $ a, b $ are finite and $ 1<a<b $. By assumptions, we have $  \opD F(x)=f(x) $. By Lemma \ref{funlemma1}, we obtain
\begin{equation*}
	F(x)-F(1)=\operatorname{I_{pq}^{+}}f(x)=\int\limits_{1}^x f(s)d_{pq}s,
\end{equation*}
which yields
\begin{equation*}
\int\limits_{1}^a f(s)d_{pq}s=	F(a)-F(1) ~\text{and}~ \int\limits_{1}^b f(s)d_{pq}s=	F(b)-F(1).
\end{equation*}

By \eqref{defintcase1ii}, thus we deduce that

\begin{equation*}
\int\limits_{a}^b f(x)d_{pq}x=F(b)-F(a).
\end{equation*}

\textbf{Case 2.} Assume that $ 0 <a<b<1 $. By assumptions and Lemma \ref{funlemma3}, we have
\begin{equation*}
F(x)-F(0)=\operatorname{I_{pq}^{ }}f(x)=\int\limits_{0}^x f(s)d_{pq}s,
\end{equation*}
which yields
\begin{equation*}
\int\limits_{0}^a f(s)d_{pq}s=	F(a)-F(0), \quad \int\limits_{0}^b f(s)d_{pq}s=	F(b)-F(0).
\end{equation*}

By  \eqref{defintcase3ii}, thus we deduce that

\begin{equation*}
\int\limits_{a}^b f(x)d_{pq}x=F(b)-F(a).
\end{equation*}

\textbf{Case 3.} Assume that $ b $ is finite and $ 0<a<1<b $. On account of Definition \ref{defintcase4} and also by Lemma \ref{funlemma2}, it can be deduced that

\begin{equation*}
\int\limits_{a}^1 f(x)d_{pq}x =\operatorname{I_{pq}^{--}}f(a)=\operatorname{I_{pq}^{--}} \opD f(a)=F(1)-F(a).
\end{equation*}

Similarly,
\begin{equation*}
\int\limits_{1}^b f(x)d_{pq}x =\operatorname{I_{pq}^{+}}f(b)=\operatorname{I_{pq}^{+}}\opD f(b)=F(b)-F(1).
\end{equation*}

Thus, we conclude finally that 

\begin{equation*}
\int\limits_{a}^b f(x)d_{pq}x=F(b)-F(a).
\end{equation*}
\end{proof}

\begin{remark}
	Assuming that $ a>1 $ and setting $ b=+\infty $, by the Definition \ref{impdefinf}, we have
\begin{eqnarray*}
\int\limits_{a}^{\infty} f(x)d_{pq}x &=& \lim\limits_{N \to \infty} \sum\limits_{j=1}^{N} \int\limits_{a^{q^{-j+1}p^{j-1}}}^{a^{q^{-j}p^{j}}} f(x) d_{pq}x \\
&=& \lim\limits_{N \to \infty} \sum\limits_{j=1}^{N} \left( F(a^{q^{-j}p^{j}})-F(a^{q^{-j+1}p^{j-1}})   \right) \\
&=& \lim\limits_{N \to \infty} \left( F(a^{q^{-N}p^{N}})-F(a)   \right),
\end{eqnarray*}
and if $ \lim\limits_{x \to \infty} F(x) $ exists then for $ b=+\infty $  \eqref{funeqn1} holds, too.
\end{remark}

\begin{corollary}\label{funcor1}
If $ f(x) $ is continuous at $ x=0 $ and $ x=1 $, then we obtain
\begin{equation*}
\int\limits_{a}^b \opD f(x)d_{pq}x=f(b)-f(a).
\end{equation*}
\end{corollary}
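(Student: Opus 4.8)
The plan is to obtain this identity as an immediate specialization of the fundamental theorem, Theorem \ref{funtheo}, by recognizing $f$ itself as the $pq$-antiderivative of the integrand. First I would set $g(x)=\opD f(x)$, so that the quantity to be evaluated is $\int\limits_{a}^{b} g(x)\,d_{pq}x$. By the very definition of a $pq$-antiderivative (a function whose $pq$-derivative equals the integrand), the function $f$ is a $pq$-antiderivative of $g$, since $\opD f(x)=g(x)$ by construction. This is the one substantive observation: the integrand $\opD f$ comes already equipped with an explicit antiderivative, namely $f$, so no existence or convergence argument is needed beyond what Theorem \ref{funtheo} already supplies.

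Next I would verify that the hypotheses of Theorem \ref{funtheo} are satisfied for the pair consisting of the integrand $g=\opD f$ and the antiderivative $f$. The theorem requires the chosen antiderivative to be continuous at $x=0$ and $x=1$; here that antiderivative is $f$, and its continuity at $0$ and $1$ is exactly the hypothesis of the corollary. Consequently Theorem \ref{funtheo} applies to $g$ with antiderivative $f$ and yields, for every $0\le a<b\le\infty$,
\[
\int\limits_{a}^{b} \opD f(x)\,d_{pq}x=\int\limits_{a}^{b} g(x)\,d_{pq}x=f(b)-f(a),
\]
which is precisely the asserted formula.

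I do not anticipate a genuine obstacle: the entire content of the corollary is carried by Theorem \ref{funtheo}, and the remaining task is only to match the roles correctly, with the ``$F$'' of the theorem played by $f$ and the ``$f$'' of the theorem played by $\opD f$. The single point deserving a moment's care is that the continuity requirement in Theorem \ref{funtheo} is imposed on the \emph{antiderivative} and not on the integrand; it is therefore essential that the corollary's hypothesis be read as continuity of $f$, the antiderivative, at $x=0$ and $x=1$, which is exactly what is stated. With this identification in place the proof reduces to a single invocation of the fundamental theorem.
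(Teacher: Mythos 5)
Your proposal is correct and is exactly the argument the paper intends: the corollary is an immediate application of Theorem \ref{funtheo} with the antiderivative $F$ taken to be $f$ itself and the integrand taken to be $\opD f$, the continuity hypothesis on $f$ at $x=0$ and $x=1$ being precisely what the theorem requires of the antiderivative. Nothing further is needed.
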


\begin{corollary}\label{funcor1}
	If $ f(x) $ and $ g(x) $ are continuous at $ x=0 $ and $ x=1 $, then we deduce the following formulas, which are the formulas of $ pq $-integration by parts.
	\begin{eqnarray}
	\int\limits_{a}^b f(x^{q})d_{pq}g(x)&=&f(b)g(b)-f(a)g(a)-\int\limits_{a}^b g(x^{p})d_{pq}f(x), \label{bypart1}\\
	\int\limits_{a}^b f(x^{p})d_{pq}g(x)&=&f(b)g(b)-f(a)g(a)-\int\limits_{a}^b g(x^{q})d_{pq}f(x) \label{bypart2}
	\end{eqnarray}
\end{corollary}

\begin{proof}
	By using the product rule \eqref{prrule1}, we see that
\begin{equation*}
\int\limits_{a}^b \opD(fg)(x)d_{pq}x=\int\limits_{a}^b \left( g(x^p)\opD f(x)+f(x^q)\opD g(x) \right) d_{pq}x
\end{equation*}	

By previous corollary, it yields
\begin{equation*}
	f(b)g(b)-f(a)g(a)= \int\limits_{a}^b f(x^q)\opD g(x) d_{pq}x + \int\limits_{a}^b g(x^p)\opD f(x) d_{pq}x.
\end{equation*}	

Since $ \opD g(x) d_{pq}x= d_{pq}g(x) $, it gives \eqref{bypart1}. Similarly, by using the product rule \eqref{prrule2}, one can get \eqref{bypart2}.

\end{proof}

\end{document}